\def\ignore#1{\relax}
\numberwithin{equation}{section}
\numberwithin{figure}{section}
\def\Z{{\mathbb Z}}
\def\Q{{\mathbb Q}}
\def\eps{\varepsilon}
\def\u #1 #2{\mathcal U(#1, #2)}  
\def\uhat #1 #2{\widehat{\mathcal U}(#1, #2)}  
\def\bmw #1{W_{#1}}  
\def\w #1 #2{\bmw {#1}^{(#2)}}  
\def\V #1 #2{V_{#1}^{(#2)}}  
\def\k #1 #2{ KT_{#1}^{(#2)}}
\def\inv{^{-1}}
\def\rhobold{{\bm \rho}}
\def\qbold{{\bm q}}
\def\ubold{{\bm u}}
\def\powerpm{^{\pm 1}}
\def\hods{\unskip\kern.55em\ignorespaces}
\theoremstyle{plain}
\newtheorem{theorem}{Theorem}[section]
\theoremstyle{plain}
\theoremstyle{plain}
\newtheorem{corollary}[theorem]{Corollary}
\theoremstyle{plain}
\newtheorem{lemma}[theorem]{Lemma}
\theoremstyle{definition}
\newtheorem{definition}[theorem]{Definition}
\theoremstyle{definition}
\theoremstyle{definition}
\newtheorem{remark}[theorem]{Remark}
\theoremstyle{remark}
\title[Cyclotomic BMW algebras]{Comparison of admissibility conditions for Cyclotomic Birman--Wenzl--Murakami algebras}
\author{Frederick M. Goodman}
\address{ Department of Mathematics\\ University of Iowa\\ Iowa
City, Iowa}
\email{ goodman@math.uiowa.edu}
\subjclass[2000]{20C08, 16G99, 81R50}
\begin{document}
 To appear in {\em Journal of Pure and Applied Algebra}
 \bigskip
 
 \begin{abstract}

 We show the equivalence of admissibility conditions proposed by Wilcox and Yu ~\cite{Wilcox-Yu} and by Rui and Xu ~\cite{rui-2008}  for the 
 parameters of cyclotomic BMW algebras.
 \end{abstract}

 \maketitle

 \section{Introduction}
Cyclotomic Birman--Wenzl--Murakami (BMW)  algebras are BMW analogues of cyclotomic Hecke algebras  ~\cite{ariki-koike, ariki-book}.  They were defined by 
H\"aring--Oldenburg in ~\cite{H-O2}  and have recently been studied by three groups of mathematicians:
Goodman and  \break Hauschild--Mosley ~\cite{GH1, GH2, GH3,  goodman-2008},  Rui, Xu, and Si  ~\cite{rui-2008, rui-2008b},   and Wilcox and Yu  ~\cite{Wilcox-Yu, Wilcox-Yu2, Wilcox-Yu3, Yu-thesis}.

A peculiar feature of these algebras is that it is necessary to impose ``admissibility" conditions on the parameters entering into the definition of the algebras in order to obtain a satisfactory theory.   There is no one obvious best set of conditions,  and the different groups studying these algebras have proposed different admissibility conditions and have chosen slightly different settings for their work. 

Under their various admissibility hypotheses on the ground ring, the  several groups of mathematicians  mentioned above have obtained similar results for the cyclotomic BMW algebras, namely  freeness and cellularity.  In addition,  Goodman \&  Hauschild--Mosley and Wilcox  \& Yu  have shown that the algebras can be realized as algebras of tangles, while
Rui et. al.  have obtained additional representation theoretic results,  for example, classification of simple modules and semisimplicity criteria.  However,  it has been difficult to compare the results of the different investigations because of the different settings.

The purpose of this note is to show that the admissibility condition proposed by Rui and Xu  ~\cite{rui-2008} is equivalent to the condition proposed by Wilcox and Yu  ~\cite{Wilcox-Yu}.
As a result, one has a consensus setting for the study of cyclotomic BMW algebras.

Further background on cyclotomic BMW algebras, motivation for the study of these algebras,  relations to other mathematical topics (quantum groups, knot theory), and further literature citations  can be found in ~\cite{GH2} and in the other papers cited above.

\subsection*{Acknowledgements}  I would like to thank Hebing Rui and Shona Yu for stimulating email correspondence about the topic of this paper.  I thank the referee for helpful comments which led to an improvement of the exposition.  

\newpage
\section{Definitions}

In general we use the definitions and notation from  ~\cite{GH3}.

\begin{definition} Fix an integer $r \ge 1$.   A {\em ground ring} 
$S$ is a commutative unital ring with parameters $\rho$, $q$, $\delta_j$   ($j \ge 0$),  and
$u_1, \dots, u_r$, with    $\rho$, $q$,   and $u_1, \dots, u_r$  invertible,  and with 
\begin{equation} \label{equation:  basic relation in ground ring}
\rho\inv - \rho=   (q\inv -q) (\delta_0 - 1).
\end{equation}
\end{definition}

\begin{definition} \label{definition:  cyclotomic BMW}
Let $S$ be a ground ring with
parameters $\rho$, $q$, $\delta_j$   ($j \ge 0$),  and
$u_1, \dots, u_r$.
The {\em cyclotomic BMW algebra}  $\bmw{n, S, r}(u_1, \dots, u_r)$  is the unital $S$--algebra
with generators $y_1^{\pm 1}$, $g_i^{\pm 1}$  and
$e_i$ ($1 \le i \le n-1$) and relations:
\begin{enumerate}
\item (Inverses)\hods $g_i g_i\inv = g_i\inv g_i = 1$ and 
$y_1 y_1\inv = y_1\inv y_1= 1$.
\item (Idempotent relation)\hods $e_i^2 = \delta_0 e_i$.
\item (Affine braid relations) 
\begin{enumerate}
\item[\rm(a)] $g_i g_{i+1} g_i = g_{i+1} g_ig_{i+1}$ and 
$g_i g_j = g_j g_i$ if $|i-j|  \ge 2$.
\item[\rm(b)] $y_1 g_1 y_1 g_1 = g_1 y_1 g_1 y_1$ and $y_1 g_j =
g_j y_1 $ if $j \ge 2$.
\end{enumerate}
\item[\rm(4)] (Commutation relations) 
\begin{enumerate}
\item[\rm(a)] $g_i e_j = e_j g_i$  and
$e_i e_j = e_j e_i$  if $|i-
j|
\ge 2$. 
\item[\rm(b)] $y_1 e_j = e_j y_1$ if $j \ge 2$.
\end{enumerate}
\item[\rm(5)] (Affine tangle relations)\vadjust{\vskip-2pt\vskip0pt}
\begin{enumerate}
\item[\rm(a)] $e_i e_{i\pm 1} e_i = e_i$,
\item[\rm(b)] $g_i g_{i\pm 1} e_i = e_{i\pm 1} e_i$ and
$ e_i  g_{i\pm 1} g_i=   e_ie_{i\pm 1}$.
\item[\rm(c)\hskip1.2pt] For $j \ge 1$, $e_1 y_1^{ j} e_1 = \delta_j e_1$. 
\vadjust{\vskip-
2pt\vskip0pt}
\end{enumerate}
\item[\rm(6)] (Kauffman skein relation)\hods  $g_i - g_i\inv = (q - q\inv)(1- e_i)$.
\item[\rm(7)] (Untwisting relations)\hods $g_i e_i = e_i g_i = \rho \inv e_i$
 and $e_i g_{i \pm 1} e_i = \rho  e_i$.
\item[\rm(8)] (Unwrapping relation)\hods $e_1 y_1 g_1 y_1 = \rho e_1 = y_1 
g_1 y_1 e_1$.
\item[\rm(9)](Cyclotomic relation) \hods $(y_1 - u_1)(y_1 - u_2) \cdots (y_1 - u_r) = 0$.
\end{enumerate}
\end{definition}

Thus,  a cyclotomic BMW algebra is the quotient of the affine BMW algebra  ~\cite{H-O2, GH1}, by the cyclotomic relation $(y_1 - u_1)(y_1 - u_2) \cdots (y_1 - u_r) = 0$.  We recall from ~\cite{GH1} that the affine BMW algebra is isomorphic to an algebra of framed affine tangles, modulo Kauffman skein relations.  Assuming admissible parameters, it has been shown that the cyclotomic BMW algebras are also isomorphic to tangle algebras ~\cite{GH3, Wilcox-Yu3, Yu-thesis}.

\begin{lemma}  \label{lemma - recursion for f_r}
 For $j \ge 1$,  there exist elements $\delta_{-j} \in \Z[\rho^{\pm 1}, q^{\pm 1}, \delta_0, \dots, \delta_j]$ such that \break $e_1 y_1^{-j} e_1 = \delta_{-j} e_1$.  Moreover,  the elements $\delta_{-j}$  are determined by the recursion relation:
\begin{equation} \label{equation:  delta(-j)  recursive relations}
\begin{aligned}
\delta_{-1} &= \rho^{-2} \delta_1 \\
\delta_{-j}  & =  \rho^{-2}  \delta_j +  (q\inv - q) \rho\inv  \sum_{k = 1}^{j-1} (\delta_k \delta_{k-j} - \delta_{2k -j}   ) \quad  (j \ge 2).
\end{aligned}
\end{equation}
\end{lemma}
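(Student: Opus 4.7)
The natural approach is induction on $j$. For the base case, I would introduce the ``second Jucys--Murphy'' element $y_2 := g_1 y_1 g_1$ and derive $y_1 y_2 e_1 = e_1$: multiplying the unwrapping relation $y_1 g_1 y_1 e_1 = \rho e_1$ on the left by $g_1$ and using $g_1 e_1 = \rho\inv e_1$ yields $y_2 y_1 e_1 = e_1$, while the affine braid relation $y_1 g_1 y_1 g_1 = g_1 y_1 g_1 y_1$ is precisely $y_1 y_2 = y_2 y_1$. Iterating $y_2 e_1 = y_1\inv e_1$ then gives $y_2^k e_1 = y_1^{-k} e_1$ and, symmetrically, $e_1 y_2^k = e_1 y_1^{-k}$ for every $k \in \Z$. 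In particular $e_1 y_1\inv e_1 = e_1 g_1 y_1 g_1 e_1 = \rho^{-2} \delta_1 e_1$, which settles the case $j = 1$.

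For the inductive step I would introduce the auxiliary scalar $G(a,b)$ defined by $e_1 y_1^a g_1 y_1^b e_1 = G(a,b)\, e_1$, and prove the key level-lowering identity
\[ e_1 y_1^a g_1 y_1^b e_1 = e_1 y_1^{a-1} g_1\inv y_1^{b-1} e_1 \]
directly from the affine-braid rewriting $y_1 g_1 = g_1\inv y_2$, the commutation of $y_1$ with $y_2$, and $y_2 e_1 = y_1\inv e_1$. Applying the Kauffman skein $g_1\inv = g_1 - (q - q\inv)(1 - e_1)$ to the right-hand side and using $e_1 y_1^{a-1} e_1 y_1^{b-1} e_1 = \delta_{a-1}\delta_{b-1} e_1$ then produces the recursion
\[ G(a,b) - G(a-1, b-1) = -(q - q\inv)\bigl(\delta_{a+b-2} - \delta_{a-1}\delta_{b-1}\bigr). \]

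To conclude, I would use the unwrapping in the form $e_1 y_1 g_1 = \rho e_1 y_1\inv$ to write $e_1 y_1^{-j} e_1 = \rho\inv G(1, -(j-1))\, e_1$, and telescope the $G$-recursion from $(a,b) = (1, -(j-1))$ up to $(j, 0)$:
\[ G(j, 0) - G(1, -(j-1)) = -(q - q\inv) \sum_{k=1}^{j-1} \bigl( \delta_{2k-j} - \delta_k \delta_{k-j} \bigr). \]
Since $G(j, 0) = \rho\inv \delta_j$ (from $g_1 e_1 = \rho\inv e_1$), solving $\delta_{-j} = \rho\inv G(1, -(j-1))$ yields the claimed formula, and the polynomial-membership assertion for $\delta_{-j}$ follows by strong induction, since every $\delta_{k-j}$ in the sum satisfies $|k-j| < j$ and every $\delta_{2k-j}$ satisfies $|2k-j| \le j-2$. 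The only conceptual obstacle is spotting the intermediate quantity $G(a,b)$ together with its level-lowering identity; once these are in hand, the rest of the proof is a mechanical telescoping.
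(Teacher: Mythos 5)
Your argument is correct. Note, however, that the paper itself offers no proof of this lemma: it simply defers to \cite{GH1} (Corollary 3.13), \cite{GH2} (Lemma 2.6), and \cite{rui-2008} (Lemma 2.17), so what you have written is a self-contained reconstruction of the computation that those references carry out. All the key steps check out: $y_2 = g_1 y_1 g_1$ commutes with $y_1$ by the affine braid relation, $y_2 y_1 e_1 = e_1$ follows from unwrapping plus untwisting, the level-lowering identity $e_1 y_1^a g_1 y_1^b e_1 = e_1 y_1^{a-1} g_1\inv y_1^{b-1} e_1$ is exactly $y_1 g_1 = g_1\inv y_2$ combined with $y_2 e_1 = y_1\inv e_1$, and the skein relation turns the telescoping sum from $(1,-(j-1))$ to $(j,0)$ into the stated recursion (your sign convention $(q-q\inv)(\delta_{2k-j}-\delta_k\delta_{k-j})$ agrees with the paper's $(q\inv-q)(\delta_k\delta_{k-j}-\delta_{2k-j})$). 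The one point I would make more explicit is that the existence of the scalars $G(a,b)$ and of the negative-index $\delta$'s occurring in the correction terms must be established \emph{simultaneously} by strong induction on $j$: at stage $j$ the corrections involve $\delta_{k-j}$ and $\delta_{2k-j}$ with indices of absolute value at most $j-1$ and $j-2$ respectively, so the chain starting from the known $G(j,0)=\rho\inv\delta_j$ is well defined; you do record the index bounds, so this is a matter of presentation rather than a gap.
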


\begin{proof}  Follows from ~\cite{GH1},   Corollary 3.13, and
~\cite{GH2}, Lemma 2.6;  or  ~\cite{rui-2008}, Lemma 2.17.
\end{proof}

 We consider what are the appropriate morphisms between ground rings for cyclotomic BMW algebras.   The obvious notion would be that of a ring homomorphism taking parameters to parameters;  that is,  if $S$ is a ground ring with parameters $\rho$, $q$, 
etc.,  and $S'$ another ground  ring with parameters $\rho'$, $q'$, etc.,  then a morphism $\varphi : S \to S'$  would be required to map $\rho \mapsto \rho'$,  
$q \mapsto q'$,  etc.  

However,  it is better to require less, for the following reason:  The parameter $q$ enters into the cyclotomic BMW
 relations only in the expression $q\inv -q$, and the transformation $q \mapsto -q\inv$ leaves this expression invariant.  Moreover,  the transformation $g_i  \mapsto -g_i$,  $\rho \mapsto -\rho$,  $q \mapsto -q$ (with all other generators and parameters unchanged)   leaves the cyclotomic BMW relations unchanged.

Taking this into account, we arrive at the following notion:

\begin{definition}  \label{definition: parameter preserving}
Let $S$ be a ground ring with
parameters $\rho$, $q$, $\delta_j$   ($j \ge 0$),  and
$u_1, \dots, u_r$.
Let  $S'$ be another ground   ring with parameters $\rho'$, $q'$, etc.  

A unital ring homomorphism $\varphi : S \rightarrow S'$ is a {\em morphism of ground rings}  if it maps
$$
\begin{cases}
&\rho \mapsto \rho',  \text{ and}\\
& q \mapsto q' \text{ or } q \mapsto    -{q'}\inv,
\end{cases}
$$
or
$$
\begin{cases}
&\rho \mapsto  -\rho',  \text{ and}\\
& q \mapsto -q' \text{ or } q \mapsto    {q'}\inv,
\end{cases}
$$
and strictly preserves all other parameters.
\end{definition}

 Suppose there is a morphism of ground rings $\psi : S \rightarrow S'$.
  Then $\psi$ extends to a 
homomorphism  from 
$\bmw{n, S, r}$  to $\bmw{n, S', r}$.     Moreover,  $\bmw{n, S, r} \otimes_S S' \cong \bmw{n,S', r}$  as $S'$--algebras.   These statements are discussed in ~\cite{GH3},  Section 2.4.

\section{Admissibility conditions}

The following {\em weak admissibility} condition is a minimal condition on the parameters to obtain a non--trivial algebra;  in the absence of weak admissibility, the generator $e_1$ is a torsion element over the ground ring; if $S$ is a field, then $e_1 = 0$,  and the cyclotomic BMW algebra reduces to a specialization of the cyclotomic Hecke algebra.   See the remarks preceding Definition 2.14 in ~\cite{GH3}.   

In the following definition,
$a_j$ denotes the signed elementary symmetric function in $u_1, \dots, u_r$,  namely,
$
a_j = (-1)^{r-j} \varepsilon_{r-j}(u_1, \dots, u_r).  
$

\begin{definition} \label{definition: weak admissibility}
Let $S$ be a ground ring with parameters
$\rho$, $q$,  $\delta_j$, $j \ge 0$, and $u_1, \dots, u_r$.
  We say that the parameters are {\em weakly admissible}  (or that the ring $S$ is weakly
admissible)  if the  following relation holds:
$$
\sum_{k = 0}^r a_k  \delta_{k + a}  = 0,
$$
for $a \in \Z$,  where for $j \ge 1$,   $\delta_{-j}$ is defined by  the recursive relations of Lemma \ref{lemma - recursion for f_r}.
\end{definition}

In order to obtain substantial results on the cyclotomic BMW algebras, it appears necessary to impose a condition on the ground ring that is stronger than weak admissibility.   
Two conditions have been proposed, one by Wicox and Yu,  and another by Rui and Xu.

First we consider the admissibility condition of Wilcox and Yu.  
Consider a ground  ring $S$ with parameters $\rho$, $q$, $\delta_j$  ($j \ge 0$)  and $u_1, \dots, u_r$. 
Let
 $W_2$  denote the cyclotomic BMW algebra
$W_2 = \bmw {2, S, r}(u_1, \dots, u_r)$.

\begin{lemma} \label{lemma: powers of y generate W2e}   The left ideal $W_2\, e_1$ in $W_2$  is equal to the $S$--span of  $\{e_1, y_1 e_1, \dots, y_1^{r-1} e_1\}$.
\end{lemma}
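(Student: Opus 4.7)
Set $M = \mathrm{span}_S\{e_1, y_1 e_1, \ldots, y_1^{r-1} e_1\}$. The inclusion $M \subseteq W_2 e_1$ is immediate, so the plan is to show that $M$ is a left ideal of $W_2$ containing $e_1$. Since $W_2$ is generated as an algebra by $y_1^{\pm 1}$, $g_1^{\pm 1}$ and $e_1$, it suffices to verify that left multiplication by each of these generators sends each spanning vector $y_1^k e_1$ ($0 \le k \le r-1$) back into $M$.

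The preparatory step is to show that $y_1^k e_1 \in M$ for every integer $k$. The cyclotomic relation gives $\sum_{j=0}^r a_j y_1^j = 0$ with $a_r = 1$ and $a_0 = (-1)^r u_1 \cdots u_r$ invertible. Multiplying on the right by $e_1$ and solving either for the top power or for $y_1^{-1}e_1$, one concludes inductively that $y_1^k e_1 \in M$ for all $k \in \Z$. In particular, left multiplication of $M$ by $y_1^{\pm 1}$ lands in $M$, and left multiplication by $e_1$ lands in $M$ because $e_1 y_1^k e_1 = \delta_k e_1$, using Lemma \ref{lemma - recursion for f_r} to define $\delta_{-j}$ for $j \ge 1$.

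The main step is to show $g_1 y_1^k e_1 \in M$ for all $k \in \Z$. The key identity is obtained from the unwrapping relation $y_1 g_1 y_1 e_1 = \rho e_1$ by left-multiplying by $y_1^{-1}$:
\begin{equation*}
g_1 y_1 e_1 = \rho\, y_1^{-1} e_1.
\end{equation*}
The affine braid relation $g_1 y_1 g_1 y_1 = y_1 g_1 y_1 g_1$ is equivalent to the commutativity of $y_1$ with $y_2 := g_1 y_1 g_1$. Combining this commutativity with the untwisting relation $g_1 e_1 = \rho^{-1} e_1$ and the identity above,
\begin{equation*}
g_1 y_1 \cdot (g_1 y_1^k e_1) \;=\; y_1^k\, g_1 y_1 g_1\, e_1 \;=\; \rho^{-1}\, y_1^k\, g_1 y_1 e_1 \;=\; y_1^{k-1} e_1.
\end{equation*}
Left-multiplying by $g_1^{-1}$ and expanding via the Kauffman skein relation $g_1^{-1} = g_1 - (q - q^{-1})(1 - e_1)$ produces the recursion
\begin{equation*}
y_1 \cdot (g_1 y_1^k e_1) \;=\; g_1 y_1^{k-1} e_1 \;-\; (q - q^{-1})\, y_1^{k-1} e_1 \;+\; (q - q^{-1})\, \delta_{k-1}\, e_1.
\end{equation*}
Starting from the base cases $g_1 e_1 = \rho^{-1} e_1$ and $g_1 y_1 e_1 = \rho y_1^{-1} e_1$, this recursion, applied in either direction of $k$, places every $g_1 y_1^k e_1$ in $M$. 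Closure under left multiplication by $g_1^{-1}$ is then automatic from the skein relation.

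The main obstacle is setting up this recursion cleanly: one needs the unwrapping relation to dispose of the case $k=1$, the Jucys--Murphy-style commutativity $[y_1, y_2] = 0$ to pass to arbitrary $k$, and the Kauffman skein relation to eliminate the $g_1^{-1}$ that appears after inverting the leading $g_1 y_1$. Once the recursion is in place, the required closure of $M$ under all generators of $W_2$ follows, and therefore $W_2 e_1 = M$.
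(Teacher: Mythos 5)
Your proof is correct. The paper itself states this lemma without proof (it is quoted from the work of Wilcox and Yu, and appears with proof in \cite{GH3}), but your argument is the standard one: show the span $M$ of $\{e_1, y_1e_1,\dots,y_1^{r-1}e_1\}$ is stable under left multiplication by the generators $y_1^{\pm1}$, $g_1^{\pm1}$, $e_1$, using the cyclotomic relation to handle all powers $y_1^k e_1$, the relation $e_1y_1^ke_1=\delta_ke_1$ for $e_1$, and the identity $g_1y_1g_1\,y_1^ke_1 = y_1^{k-1}e_1$ (from the unwrapping relation and $[y_1,g_1y_1g_1]=0$) together with the Kauffman skein relation to set up the two-sided recursion for $g_1y_1^ke_1$. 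All steps check out, including the base cases $g_1e_1=\rho^{-1}e_1$ and $g_1y_1e_1=\rho y_1^{-1}e_1$.
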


\begin{theorem}[Wilcox \& Yu, \cite{Wilcox-Yu}] \label{theorem: equivalent conditions for admissibility}
 Let $S$ be a ground ring with
parameters $\rho$, $q$, $\delta_j$ ($j \ge 0$)  and $u_1, \dots, u_r$.  
Assume that $(q - q\inv)$ is not a zero--divisor in $S$.
The following conditions are equivalent:
\begin{enumerate}
\item  $\{e_1, y_1 e_1, \dots, y_1^{r-1} e_1\} \subseteq W_2$ is linearly independent over $S$.
\item  The parameters satisfy the following relations:

\begin{equation} \label{equation: yu wilcox admissibility condition 1}
\begin{aligned}
&\rho(a_\ell - a_{r-\ell}/a_0) \ + 
\\& (q-q\inv)\bigg [ \sum_{j = 1}^{r - \ell} a_{j+\ell} \delta_j 
-  \sum_{j = \max(\ell + 1, \lceil r/2 \rceil)}^{\lfloor (\ell + r)/2 \rfloor} a_{2j - \ell}
+   \sum_{j =  \lceil \ell/2 \rceil}^{\min(\ell, \lceil r/2 \rceil -1)} a_{2j - \ell} \bigg ]= 0, \\& \quad \text{for $1 \le \ell \le r-1$},
\end{aligned}
\end{equation}
\begin{equation} \label{equation: yu wilcox admissibility condition 2}
\rho\inv a_0 - \rho a_0\inv = 
\begin{cases}
0 & \text{if  $r$ is odd} \\
(q - q\inv) & \text{if $r$ is even},
\end{cases}
\end{equation}
 and
\begin{equation} \label{equation:  weak admissibility in yu wilcox}
\delta_a = -\sum_{j=0}^{r-1} a_j \delta_{a-r+j} \quad \text{for }  a \ge r.
\end{equation}

\item  $S$ is weakly admissible, and $W_2$ admits a module $M$ with an $S$--basis \break $\{v_0, y_1 v_0, \dots,  y_1^{r-1} v_0\}$  such that  $e_1 (y_1^j v_0) = \delta_j v_0$\  for \ 
$0 \le j \le  r-1$, \  $g_1 v_0 = \rho\inv v_0$, and \ $y_2  y_1^j v_0 = y_1^{j-1} v_0 $.     
\end{enumerate}
\end{theorem}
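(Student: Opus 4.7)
The plan is to establish the chain of implications $(1) \Rightarrow (3) \Rightarrow (2) \Rightarrow (1)$.

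For $(1) \Rightarrow (3)$, I would take $M := W_2 e_1$ with $v_0 := e_1$. By Lemma \ref{lemma: powers of y generate W2e} together with the hypothesis, $\{y_1^j e_1\}_{j=0}^{r-1}$ is an $S$-basis of $M$. The formulas $e_1 y_1^j v_0 = \delta_j v_0$ and $g_1 v_0 = \rho\inv v_0$ are immediate from the affine tangle relation $e_1 y_1^j e_1 = \delta_j e_1$ and the untwisting relation $g_1 e_1 = \rho\inv e_1$. The formula $y_2 y_1^j v_0 = y_1^{j-1} v_0$ reduces by commutativity of $y_1$ and $y_2$ (with induction) to $y_2 y_1 e_1 = e_1$, which follows from the unwrapping relation $y_1 g_1 y_1 e_1 = \rho e_1$ combined with $g_1 e_1 = \rho\inv e_1$. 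Weak admissibility is obtained by multiplying the cyclotomic relation on the left by $e_1 y_1^a$ and on the right by $e_1$, yielding $\left(\sum_k a_k \delta_{k+a}\right) e_1 = 0$; linear independence of $e_1$ then gives $\sum_k a_k \delta_{k+a} = 0$ in $S$.

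For $(3) \Rightarrow (2)$, relation \eqref{equation:  weak admissibility in yu wilcox} is just the weak admissibility assumed in (3). For \eqref{equation: yu wilcox admissibility condition 1} and \eqref{equation: yu wilcox admissibility condition 2}, I would exploit the module structure on $M$ by computing $g_1 y_1^\ell v_0$ for $0 \le \ell \le r-1$ in two ways: first by iteratively applying the identity $g_1 y_1 = y_2 g_1\inv$ together with $y_2 v_0 = y_1\inv v_0$ and the cyclotomic expansion of $y_1\inv v_0$ in the basis $\{y_1^k v_0\}$ (using that $a_0 = (-1)^r u_1 \cdots u_r$ is a unit); second by applying the Kauffman skein together with $g_1 e_1 = \rho\inv e_1$. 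Equating coefficients in the basis yields the Wilcox--Yu relations as the necessary consistency conditions.

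For $(2) \Rightarrow (1)$, I would construct $M$ explicitly as a free $S$-module on $\{m_0, \ldots, m_{r-1}\}$ and prescribe the action of each $W_2$-generator by the formulas dictated by (3): $y_1$ acts by shift with cyclotomic reduction at $m_{r-1}$; $e_1 m_j = \delta_j m_0$ (with $\delta_j$ for $j \ge r$ determined consistently via \eqref{equation:  weak admissibility in yu wilcox}); $g_1 m_0 = \rho\inv m_0$, and $g_1 m_j$ for $j \ge 1$ is determined inductively by the requirement $y_2 m_j = m_{j-1}$ together with the skein. Verifying that all defining relations of $W_2$ hold on $M$ is the technical heart of the proof: \eqref{equation:  weak admissibility in yu wilcox} makes the $e_1$-action compatible with the cyclotomic relation, while \eqref{equation: yu wilcox admissibility condition 1} and \eqref{equation: yu wilcox admissibility condition 2} are exactly what is needed for the Kauffman skein and unwrapping relations to hold on $M$. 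Once $M$ is in hand, the element $m_0 \in M$ is annihilated by every element of the left ideal that annihilates $e_1$ in $W_2$ (since this left ideal is generated by the relations the construction explicitly imposes on $m_0$), so there is a well-defined surjective $W_2$-module homomorphism $W_2 e_1 \to M$ sending $e_1 \mapsto m_0$; combined with Lemma \ref{lemma: powers of y generate W2e}, this surjection onto a free $S$-module of rank $r$ forces $\{y_1^j e_1\}_{j=0}^{r-1}$ to be $S$-linearly independent.

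The main obstacle lies in the last step: the detailed verification that \eqref{equation: yu wilcox admissibility condition 1}--\eqref{equation:  weak admissibility in yu wilcox} are precisely sufficient for every defining relation of $W_2$ to hold on the explicit $M$, and that the resulting $\mathrm{LAnn}(e_1)$ is the one generated by the relations imposed on $m_0$. This is elementary but lengthy, requiring matching coefficients in the basis $\{m_j\}$ after expanding both sides of each $W_2$-relation.
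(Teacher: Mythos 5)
The paper does not actually prove this theorem: it is quoted from Wilcox and Yu \cite{Wilcox-Yu}, so your attempt can only be measured against the argument in the literature. Your implications $(1)\Rightarrow(3)$ and $(3)\Rightarrow(2)$ are sound in outline: taking $M=W_2e_1$ with $v_0=e_1$ and invoking Lemma \ref{lemma: powers of y generate W2e} is exactly right, the module identities follow from the tangle, untwisting and unwrapping relations as you say, and extracting \eqref{equation: yu wilcox admissibility condition 1}--\eqref{equation: yu wilcox admissibility condition 2} by expanding the $W_2$-relations in the basis of $M$ is the correct (if computation-heavy) strategy.

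The genuine gap is in $(2)\Rightarrow(1)$, at precisely the step you wave through. To get a $W_2$-module map $W_2e_1\to M$, $xe_1\mapsto xm_0$, you need the left annihilator of $e_1$ in $W_2$ to be contained in the left annihilator of $m_0$; your justification --- that this left ideal ``is generated by the relations the construction explicitly imposes on $m_0$'' --- is unsupported and essentially circular, since $W_2$ is presented by generators and relations as an \emph{algebra}, and no presentation of the cyclic module $W_2e_1$ is available a priori (determining that annihilator is equivalent to the linear independence you are trying to prove). The natural repair --- let $\sum_j c_jy_1^je_1=0$ act on $M$ and evaluate on a suitable vector --- also fails in general: since $e_1(y_1^av_0)=\delta_av_0$, one has $e_1M\subseteq Sv_0$ and the operator $\sum_jc_jy_1^je_1$ sends $m\mapsto\varepsilon(m)\sum_jc_jy_1^jv_0$ with $\varepsilon(y_1^av_0)=\delta_a$; if all the $\delta_a$ ($0\le a\le r-1$) are zero-divisors (they can even all vanish under condition (2)), the single rank-$r$ module $M$ cannot detect the nonvanishing of $\sum_jc_jy_1^je_1$. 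This is why the actual proofs in \cite{Wilcox-Yu} and the related arguments in \cite{GH3, rui-2008} require more: either a faithful-enough representation of the whole of $W_2$ (not just of a rank-$r$ quotient of $W_2e_1$), or a generic-ground-ring/specialization argument of the kind this paper's Theorem \ref{theorem: equivalence} itself relies on. As written, your proof of $(2)\Rightarrow(1)$ does not close.
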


\begin{definition}[Wilcox and Yu, \cite{Wilcox-Yu}]     Let $S$ be a  ground ring with
parameters $\rho$, $q$, $\delta_j$ ($j \ge 0$)  and $u_1, \dots, u_r$.  Assume that $(q - q\inv)$ is not a zero--divisor in $S$.
One says that $S$ is {\em admissible} (or that the parameters are {\em admissible})  if the equivalent conditions of Theorem \ref{theorem: equivalent conditions for admissibility} hold.
\end{definition}

\begin{remark} \mbox{}
\begin{enumerate}
\item In later work,  Wilcox and Yu considered a  more subtle version of their admissibility condition that is also valid if $q - q\inv$ is a zero--divisor.  
\item  If $R$ is an integral ground ring and Equation (\ref{equation: yu wilcox admissibility condition 2}) holds, then $\rho = \pm a_0$ if $r$ is odd, and $\rho \in \{q\inv a_0,  -q a_0\}$ if $r$ is even.
\end{enumerate}
\end{remark}

Next we discuss the admissiblity condition of Rui and Xu ~\cite{rui-2008},   called   {\em $u$-admissibility}.   In ~\cite{rui-2008},  ground rings are assumed to be integral domains, and it is assumed that $q - q\inv$ is invertible.  Since we do not want to specialize to this situation immediately,  the form in which we describe $u$--admissibility will be a little different from that in ~\cite{rui-2008}. 

The definition of $u$--admissibility is based on a heuristic involving linear independence of
 $\{e_1, y_1 e_1, \dots, y_1^{r-1} e_1\} \subseteq W_2$, under additional assumptions on $u_1, \dots, u_r$.
 Suppose that $F$ is a field and $u_1, \dots, u_r$ are {\em distinct}  invertible elements of $F$ with
 $u_i u_j \ne 1$ for all $i,  j$.   Moreover, suppose $\rho$ and $q$ are non-zero elements of $F$ with
 $q - q\inv \ne 0$.   Define quantities $\gamma_j$  ($1 \le j \le r$)  by
 \begin{equation}\label{equation: formula for gammas00}
\gamma_j =\prod_{\ell \ne j} \frac{(u_\ell u_j -1)}{u_j - u_\ell} \left(  
 \frac{1-u_j^2}{\rho(q\inv -q)}  \prod_{\ell \ne j} u_\ell   + 
\begin{cases}
1 &\text{ if $r$  is odd} \\
-u_j     &\text{if $r$ is even} 
\end{cases}
\right )
\end{equation}
The elements $\gamma_j$  arise as the unique solutions to the system of linear equations:
\begin{equation}\label{equation:  equation for gammas00}
\sum_j  \frac{1}{1-u_i u_j}\,  \gamma_j =    \frac{1}{1-u_i^2}  +  \frac{1}{\rho (q\inv - q)} \quad (1 \le i \le r)
\end{equation}

Then one has the following analogue of the theorem of Wilcox \& Yu cited above:
\begin{theorem}[\cite{GH3}, Theorem 3.10] \label{theorem: equivalent conditions for u--admissibility}
 Let $S$ be an integral ground ring with parameters $\rho$, $q$, $\delta_j$ ($j \ge 0$)  and $u_1, \dots, u_r$.
    Assume that $(q - q\inv) \ne 0$, that   the elements $u_i$ are distinct,  and  that $u_i u_j \ne 1$ for all $i, j$.
 Define $\gamma_j$ in the field of fractions of $S$ by  (\ref{equation: formula for gammas00}), for $1 \le j \le r$.
 Then the following conditions are equivalent:

\begin{enumerate}

\item  $\{e_1, y_1 e_1, \dots, y_1^{r-1} e_1\} \subseteq  \bmw {2,S}$ is linearly independent over $S$.

\item    For all $a \ge 0$, we have $\delta_a = \sum_{j = 1}^r  \gamma_j u_j^a$.

\end{enumerate}
\end{theorem}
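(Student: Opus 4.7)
The plan is to deduce Theorem~\ref{theorem: equivalent conditions for u--admissibility} from the Wilcox--Yu criterion (Theorem~\ref{theorem: equivalent conditions for admissibility}) via a change of coordinates made available by the hypothesis that $u_1,\ldots,u_r$ are distinct. First I would apply Theorem~\ref{theorem: equivalent conditions for admissibility} to replace condition~(1) by the system (\ref{equation: yu wilcox admissibility condition 1})--(\ref{equation:  weak admissibility in yu wilcox}) of Wilcox--Yu admissibility relations. Relation~(\ref{equation:  weak admissibility in yu wilcox}) is precisely the statement that the sequence $(\delta_a)_{a\ge 0}$ satisfies the linear recurrence whose characteristic polynomial is $\prod_{j=1}^{r}(t-u_j)=\sum_{k=0}^{r}a_k t^{k}$. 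Because the $u_j$ are distinct, the general solution over the field of fractions of $S$ has the form $\delta_a=\sum_{j=1}^{r}c_j u_j^{a}$, with scalars $c_j$ uniquely determined by $\delta_0,\ldots,\delta_{r-1}$ through an invertible Vandermonde system. So, modulo this change of coordinates, condition~(2) reduces to the assertion $c_j=\gamma_j$ for all $j$.

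Next I would translate the remaining relations (\ref{equation: yu wilcox admissibility condition 1}) and (\ref{equation: yu wilcox admissibility condition 2}), together with the ground-ring relation (\ref{equation:  basic relation in ground ring}) that fixes $\delta_0$ in terms of $\rho,q$, into a system of $r$ linear constraints on $\rho$ and $\delta_1,\ldots,\delta_{r-1}$, and show that under $\delta_a=\sum_k c_k u_k^a$ these become precisely the Cauchy-type system~(\ref{equation:  equation for gammas00}) for the $c_k$. The cleanest bookkeeping uses the generating function $F(t)=\sum_{a\ge 0}\delta_a t^{a}=\sum_{k}c_k/(1-u_k t)$: the convolutions $\sum_{j}a_{j+\ell}\delta_j$ appearing in~(\ref{equation: yu wilcox admissibility condition 1}) collapse, upon dividing by $\prod_j(t-u_j)$, into residue contributions at $t=u_k\inv$, matching the $(1-u_i u_j)\inv$ terms in~(\ref{equation:  equation for gammas00}). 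The inhomogeneous term $1/(1-u_i^2)+1/(\rho(q\inv-q))$ on the right-hand side of~(\ref{equation:  equation for gammas00}) accounts for the $\rho$-dependent constants in~(\ref{equation: yu wilcox admissibility condition 1}) and for the odd/even split in~(\ref{equation: yu wilcox admissibility condition 2}).

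Finally, the hypothesis $u_i u_j\ne 1$ makes the Cauchy matrix $[(1-u_i u_j)\inv]_{i,j}$ invertible (classical Cauchy determinant, combined with the distinctness of the $u_j$), so~(\ref{equation:  equation for gammas00}) has a unique solution in the field of fractions. A residue computation applied to the rational function obtained by subtracting the two sides of~(\ref{equation:  equation for gammas00}), viewed as a function of the parameter $u_i$, then recovers the closed form~(\ref{equation: formula for gammas00}), with its parity dichotomy inherited directly from~(\ref{equation: yu wilcox admissibility condition 2}).

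I expect the main obstacle to be the middle step: the Wilcox--Yu relations are phrased as asymmetric floor/ceiling sums of elementary symmetric functions in the $u_j$, whereas~(\ref{equation:  equation for gammas00}) is symmetric and of Cauchy type, so the translation amounts to a partial-fractions identity that must be untangled carefully at the boundary indices. Handling the odd/even dichotomy of~(\ref{equation: yu wilcox admissibility condition 2}) and~(\ref{equation: formula for gammas00}) uniformly within that residue calculation will require a small amount of additional care, but once the generating-function identity is in place the remaining equivalences are formal.
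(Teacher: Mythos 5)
First, a point of reference: this paper does not actually prove the statement --- it is imported verbatim from \cite{GH3}, Theorem 3.10, where it is established by directly constructing and analyzing a rank-$r$ module for $W_2$ rather than by reduction to the Wilcox--Yu criterion. Your route --- invoke Theorem \ref{theorem: equivalent conditions for admissibility}, solve the recurrence (\ref{equation:  weak admissibility in yu wilcox}) using distinctness of the $u_j$, and match what remains against the Cauchy system (\ref{equation:  equation for gammas00}) --- is therefore a genuinely different derivation, and it is viable in outline: it amounts to specializing to distinct $u_i$ the argument this paper runs for Theorem \ref{theorem: equivalence}, where the unitriangularity of (\ref{equation: yu wilcox admissibility condition 1}) in the variables $(q-q\inv)\delta_j$ shows that admissibility determines all the $\delta_a$, while Corollary \ref{corollary:  eta's satisfy Wilcox Yu condition} and Lemma \ref{lemma: weak admissibility condition for eta's} show that $\eta_a=\sum_j\gamma_ju_j^a$ satisfies the same system.

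The gap is in the central translation, which you assert rather than carry out, and whose stated mechanism is not correct as it stands. When $\delta_a=\sum_j\gamma_ju_j^a$, the left side of (\ref{equation:  equation for gammas00}) equals $\sum_{a\ge 0}\delta_a u_i^a$; that is, the Cauchy system is the evaluation of the \emph{full} generating function at the $r$ points $u_i$ and involves all of the $\delta_a$. By contrast, each relation in (\ref{equation: yu wilcox admissibility condition 1}) involves only $\delta_1,\dots,\delta_{r-\ell}$, and the convolution does not ``collapse into residue contributions'' by itself: under $\delta_a=\sum_k c_k u_k^a$ one computes, using $\prod_m(u_k-u_m)=0$, that $\sum_{j=1}^{r-\ell}a_{j+\ell}u_k^{j}=-u_k^{-\ell}\sum_{i=0}^{\ell-1}a_iu_k^{i}-a_\ell$, with no factors $(1-u_iu_k)\inv$ appearing. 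To bridge the two systems you must first use (\ref{equation:  weak admissibility in yu wilcox}) to write $\sum_{a\ge0}\delta_at^a=P(t)/\prod_j(1-u_jt)$ with $\deg P<r$ and coefficients given by those convolutions, then evaluate at $t=u_i$ and substitute the values assigned to the coefficients by (\ref{equation: yu wilcox admissibility condition 1}), (\ref{equation: yu wilcox admissibility condition 2}) and (\ref{equation:  basic relation in ground ring}); the resulting identity is precisely the floor/ceiling bookkeeping that occupies the proof of the lemma establishing (\ref{equation:  admissibility  of  eta's}), run in reverse, and it is the entire content of the claim. Two further items are needed and not supplied: a verification that the closed form (\ref{equation: formula for gammas00}) actually solves (\ref{equation:  equation for gammas00}) (this is nontrivial because of the parity dichotomy and is the substance of \cite{GH3}, Lemma 3.18 and \cite{rui-2008}, Lemma 2.23); and, for the implication (2)$\Rightarrow$(1), a derivation of (\ref{equation: yu wilcox admissibility condition 2}) from $\delta_0=\sum_j\gamma_j$ together with (\ref{equation:  basic relation in ground ring}) and Lemma \ref{lemma:  calculation of Z1}(4) --- without this, admissibility does not follow, since (\ref{equation: yu wilcox admissibility condition 2}) constrains $\rho$ and is not a consequence of (\ref{equation: yu wilcox admissibility condition 1}) and (\ref{equation:  weak admissibility in yu wilcox}) alone.
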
  

Of course, by Theorem \ref{theorem: equivalent conditions for admissibility}, the conditions are equivalent to the admissibility of $S$ (in the special case considered, namely that the $u_i$ are distinct and 
$u_i u_j \ne 1$ or all $i, j$.)

\medskip 
Although the $\gamma_j$ are rational functions with singularities at $u_i = u_j$,   one can show that the quantities $(q-q\inv)\sum_{j = 1}^r  \gamma_j u_j^a$ are
polynomials   in $u_1, \dots, u_r$, $\rho\powerpm$, and  \break $(q-q\inv)$, as follows:
Let $\ubold_1,  \dots, \ubold_r$,   $\rhobold$,  $\qbold$, and   $t$  be algebraically independent indeterminants over $\Z$.  
Define
\begin{equation}\label{equation: definition of G(t)}
G(t) = G(\ubold_1, \dots, \ubold_r; t) = \prod_{\ell = 1}^r  \frac{t - \ubold_\ell}{t \ubold_\ell -1}.
\end{equation}
Let $\mu_a = \mu_a(\ubold_1, \dots, \ubold_r)$  denote the $a$--th coefficient of the formal power series expansion of $G(t)$.
Notice that each $\mu_a$  is a symmetric polynomial in $\ubold_1,  \dots, \ubold_r$  and that
$G(t\inv)  =  G(t)\inv$.  Define   
\begin{equation} \label{equation: formula for Z of t}
Z(t) = Z(t; \ubold_1, \dots, \ubold_r, \rhobold, \qbold) = -\rhobold\inv   +  (\qbold -\qbold\inv)\frac{t^2}{t^2 -1} + A(t) \  G(t\inv),
\end{equation}
where
$$ 
A(t) =
\begin{cases}
\displaystyle
{- \rho\inv a_0 }\ +\  (\qbold -\qbold\inv){t}/{(t^2-1)}  \quad &\text{if $r$ is  odd, and } \\
\displaystyle
{\rho\inv   a_0} \ -\  (\qbold -\qbold\inv){t^2}/{(t^2-1)} \quad &\text{if $r$ is  even}.
\end{cases}
$$

In the following, we use the notation  $\delta_{(P)} = 1$ if $(P)$ is true and $\delta_{(P)} = 0$ if $(P)$ is false.
 Write $a_j$  for $a_j(\ubold_1, \dots, \ubold_r) = (-1)^{r-j} \eps_j(\ubold_1, \dots, \ubold_r)$.  

\vbox{
\begin{lemma}[\cite{GH3},  Lemma  3.18;  ~\cite{rui-2008}, Lemmas 2.23 and 2.28]   \label{lemma:  calculation of Z1} \mbox{}

 Let  $\ubold_1,  \dots, \ubold_r$,   $\rhobold$,  $\qbold$, and   $t$  be algebraically independent indeterminants over $\Z$.   Define  $\eta_a = \sum_j  \gamma_j \ubold_j^a$  for $a \ge 0$,  where $\gamma_j$ is given by   (~\ref{equation: formula for gammas00}).    Then
\begin{enumerate}
\item
$\displaystyle (\qbold -\qbold\inv) \ \sum_{a \ge 0} \eta_a t^{-a}  =  Z(t;  \ubold_1, \dots, \ubold_r, \rhobold, \qbold)$.
\suspend{enumerate}

 Now let $R$ be any commutative ring with invertible  elements $\rho$, $q$, and $u_1, \dots, u_r$, and additional elements $\eta_a$,  $a \ge 0$.    Let $t$ be an indeterminant over $R$.  Let \  $\mu_a = \mu_a(u_1, \dots, u_r)$  be the coefficients of the  formal power series expansion of 
 $G(u_1, \dots, u_r; t)$. 
Suppose that
$$
(q - q\inv) \sum_{a \ge 0} \eta_a t^{-a}  = Z(t; u_1, \dots, u_r, \rho, q).
$$
 Then
\resume{enumerate}
\item
 If $r$ is odd, then
for $a \ge 0$,  
$$
\begin{aligned}
(q -q\inv)\eta_a = - \delta_{(a = 0)}\ & {\rho\inv  }
+(q -q\inv) \delta_{(\text{$a$ is even})}  \\ & -     \mu_a \,{\rho\inv a_0 }  + 
(q -q\inv)(\mu_{a-1}  + \mu_{a-3}  + \cdots ) .
\end{aligned}
$$
\item
If $r$ is even,  then
for $a \ge 0$,  
$$
\begin{aligned}
(q -q\inv)\eta_a = - \delta_{(a = 0)}\ & {\rho\inv  }
+ (q -q\inv)\delta_{(\text{$a$ is even})}  \\ &+     \mu_a  \,{\rho\inv a_0 } - (q -q\inv)(\mu_a   + \mu_{a-2}  + \mu_{a-4}  + \cdots)  .
\end{aligned}
$$
\item  $\displaystyle (q -q\inv)\, \eta_0 =  ({a_0^2 - 1 })\,\rho\inv
+(q -q\inv)\,( 1  - \delta_{(\text{$r$ is even})}\ a_0)$.
\item  For all $a \ge 0$,   $(q- q\inv) \eta_a$ is an element of the ring
 $\Z[u_1,  \dots, u_r, q - q\inv,  \rho\inv],$
 and is symmetric in $u_1, \dots, u_r$.
\end{enumerate}
\end{lemma}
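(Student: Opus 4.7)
The plan is to prove (1) by a residue-matching identity of rational functions in $t$, and then deduce (2)--(5) as formal consequences by Laurent-series expansion in $t^{-1}$.

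For (1), I interchange the order of summation to write
$$
\sum_{a \ge 0} \eta_a t^{-a} = \sum_{j=1}^r \gamma_j \sum_{a \ge 0} (\ubold_j/t)^a = \sum_{j=1}^r \gamma_j \, \frac{t}{t - \ubold_j},
$$
a rational function whose only finite poles are simple ones at $t = \ubold_j$, with residue $\gamma_j \ubold_j$. On the right, $Z(t)/(\qbold - \qbold\inv)$ has potential simple poles at $t = \pm 1$ and at $t = \ubold_j$ (the latter from the $G(t\inv)$ factor). The first check is that the residues at $t = \pm 1$ cancel: using $G(1) = (-1)^r$, $G(-1) = 1$, and the piecewise definition of $A(t)$, the residue of $A(t) G(t\inv)$ at $t = \pm 1$ exactly cancels that of $(\qbold - \qbold\inv) t^2/(t^2-1)$ in both parities of $r$. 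The main step is to match residues at $t = \ubold_j$: from
$$
G(t\inv) = \prod_\ell \frac{1 - t \ubold_\ell}{\ubold_\ell - t}, \qquad \mathrm{Res}_{t = \ubold_j} G(t\inv) = -(1 - \ubold_j^2) \prod_{\ell \ne j} \frac{1 - \ubold_j \ubold_\ell}{\ubold_\ell - \ubold_j},
$$
one multiplies by $A(\ubold_j)$ and checks that the product equals $(\qbold - \qbold\inv) \gamma_j \ubold_j$ via the explicit formula (\ref{equation: formula for gammas00}); the two parities of $r$ are handled separately, but in each case $A(\ubold_j)$ conspires with $\gamma_j$ to give the same product. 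Both sides agree at $t = \infty$ (with common value $(\qbold - \qbold\inv) \eta_0$), so as rational functions they coincide.

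For (2) and (3), I use (1) and extract the coefficient of $t^{-a}$ in $Z(t)$. The summand $-\rho\inv$ contributes $-\rho\inv \delta_{(a = 0)}$, and $(q - q\inv) t^2/(t^2 - 1) = (q - q\inv) \sum_{k \ge 0} t^{-2k}$ contributes $(q - q\inv) \delta_{(a \text{ even})}$. For the cross term $A(t) G(t\inv) = A(t) \sum_b \mu_b t^{-b}$, expand $A(t)$ itself as a Laurent series in $t\inv$: for $r$ odd, $A(t) = -\rho\inv a_0 + (q - q\inv) \sum_{k \ge 0} t^{-(2k+1)}$, giving coefficient $-\rho\inv a_0 \mu_a + (q - q\inv)(\mu_{a-1} + \mu_{a-3} + \cdots)$; for $r$ even, $A(t) = \rho\inv a_0 - (q - q\inv) \sum_{k \ge 0} t^{-2k}$, giving $\rho\inv a_0 \mu_a - (q - q\inv)(\mu_a + \mu_{a-2} + \cdots)$. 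Adding the two simpler contributions yields the asserted formulas.

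Part (4) is immediate by specialising $a = 0$ and using $\mu_0 = G(0) = \prod_\ell u_\ell = (-1)^r a_0$; both parities of $r$ collapse to $(q - q\inv) \eta_0 = (a_0^2 - 1) \rho\inv + (q - q\inv)(1 - \delta_{(r \text{ even})} a_0)$. Part (5) reads off from (2) and (3): each $(q - q\inv) \eta_a$ is exhibited as a $\Z$-linear combination of products of $\mu_0, \dots, \mu_a$, $a_0$, $\rho\inv$, and $(q - q\inv)$, and since the $\mu_b$ and $a_0$ are symmetric integer polynomials in $u_1, \dots, u_r$, so is the result. The main technical obstacle is the residue computation at $t = \ubold_j$ in (1): the form of (\ref{equation: formula for gammas00}) is engineered precisely to make this match work, but carrying out the calculation for both parities of $r$ while tracking the factor $(1 - \ubold_j^2)/\rhobold(\qbold\inv - \qbold)$ requires careful bookkeeping.
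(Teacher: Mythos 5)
Your proposal is correct. Note that the paper itself gives no proof of this lemma --- it is quoted from \cite{GH3} (Lemma 3.18) and \cite{rui-2008} (Lemmas 2.23 and 2.28) --- so there is no in-paper argument to compare against; but your residue-matching proof of (1) (checking that the poles of $Z(t)$ at $t=\pm 1$ cancel, that the residue of $A(t)\,G(t\inv)$ at $t=\ubold_j$ equals $(\qbold-\qbold\inv)\gamma_j\ubold_j$, and that both sides agree at $t=\infty$) is exactly the partial-fraction computation that the cited sources carry out, and I verified the residue identities in both parities of $r$. The deduction of (2)--(5) by extracting the coefficient of $t^{-a}$ from the three summands of $Z(t)$, using $\mu_0=G(0)=(-1)^r a_0$ for (4), is likewise correct and, being pure coefficient extraction, applies verbatim in the general commutative ring $R$ of the second half of the statement.
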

}

\begin{remark}
In the lemma, it is not assumed that we are working in a ground ring, i.e. that condition
(\ref{equation:  basic relation in ground ring}) holds.  
\end{remark}

Suppose that $S$ is an integral ground ring in which the $u_j$ are distinct,  $u_i u_j \ne 1$ for all $i,  j$, and $q - q\inv \ne 0$. Suppose, moreover, that $S$ is admissible, that is   \break 
$\{e, y_1 e, \dots, y_1^{r-1} e\} \subseteq  \bmw {2,S}$ is linearly independent over $S$.    Then by Theorem \ref{theorem: equivalent conditions for u--admissibility},  we have $\delta_a = \sum_{j = 1}^r \gamma_j u_j^a$  for $a \ge 0$.   It then follows from Lemma \ref{lemma:  calculation of Z1}, part (1), that
\begin{equation} \label{Z is gen fn for delta's}
(q - q\inv) \sum_{a \ge 0} \delta_a t^{-a}  = Z(t; u_1, \dots, u_r, \rho, q).
\end{equation}
 However, Equation (\ref{Z is gen fn for delta's}) makes sense as a condition on ground rings, without any special assumptions on the elements $u_i$;  this motivates the following definition of Rui and Xu:

\begin{definition}[Rui and Xu, \cite{rui-2008}]  \label{definition: RX admissibility}
 Let $S$ be a ground ring with
parameters $\rho$, $q$, $\delta_j$ ($j \ge 0$)  and $u_1, \dots, u_r$.  
Assume that $(q - q\inv)$ is not a zero--divisor in $S$. One says that 
$S$ is {\em $u$--admissible} (or that the parameters are {\em $u$--admissible})   if  
$$(q - q\inv) \sum_{a \ge 0}  \delta_a t^{-a} =    Z(t; u_1, \dots, u_r, \rho, q),$$  
where $Z$ is defined in Equation (\ref{equation: formula for Z of t}).
\end{definition}

\begin{remark} \label{remark: parameter preserving transformations preserve admissibility}
 \mbox{}
 \begin{enumerate}
 \item Suppose that   $S$ is a $u$--admissible ground ring.
 Then conclusions  (2)--(5) of Lemma \ref{lemma:  calculation of Z1} hold, with  $\eta_a$ replaced with
 $\delta_a$.   Moreover, statement (4) of  Lemma  \ref{lemma:  calculation of Z1} together with the ground ring condition (\ref{equation:  basic relation in ground ring}) implies that condition (\ref{equation: yu wilcox admissibility condition 2}) holds.    If, in addition, $R$ is assumed to be integral, then
we have   $\rho = \pm a_0$ if $r$ is odd, and $\rho \in \{q\inv a_0,  -q a_0\}$ if $r$ is even.
  \item  

Let $S$ be a ground ring with admissible (resp. $u$--admissible)  parameters  $\rho$, $q$, $\delta_j$   ($j \ge 0$),  and
$u_1, \dots, u_r$.  Then  
$$
\rho, -q\inv, \delta_j \  (j \ge 0),  \text{ and }  u_1, \dots, u_r
$$
and
$$
-\rho, -q, \delta_j \  (j \ge 0),  \text{ and }  u_1, \dots, u_r
$$
are also sets of admissible (resp. $u$--admissible) parameters.  
 If $S$ is a  ground ring with admissible (resp. $u$--admissible)  parameters and $\varphi : S \rightarrow S'$ is 
a morphism of ground rings in the sense of Definition \ref{definition: parameter preserving}, such that
 $\varphi(q - q\inv)$  is  not a zero--divisor,  then $S'$ is also admissible (resp. $u$--admissible).
 \item  Considering parts (1) and (2) of this remark,  if $S$ is a $u$--admissible integral ground ring, one can 
 assume $\rho = -a_0 = \prod_{j = 1}^r u_j$,   if $r$ is odd, and $\rho = q\inv a_0 = q\inv \prod_{j = 1}^r u_j$,  if $r$ is even. 
\end{enumerate}
\end{remark}

\ignore{
We now consider the case  that ground rings are integral domains.
Equation (\ref{equation: yu wilcox admissibility condition 2}),  which appears in the definitions of admissibility and of $u$--admissiblity,   is equivalent to
\begin{equation}
\begin{cases}
(a_0 - \rho)(a_0 + \rho) = 0 &\text{if $r$ is odd, and}\\
(\rho - q\inv a_0)(\rho + q a_0) = 0 &\text{if $r$ is even.}\\
\end{cases}
\end{equation}
If $S$ is an integral domain, we have  $\rho = \pm a_0$ if $r$ is odd,  and $\rho = -q a_0$ or $\rho = q\inv a_0$ if $r$ is even.  However,  we do not need to entertain both solutions.  Considering Remark \ref{remark: parameter preserving transformations preserve admissibility},  we can assume
\begin{equation} \label{equation: solve for rho 1}
\rho = -a_0 =\prod_{j = 1}^r u_j, \quad \text{if $r$ is odd, and}
\end{equation}
\begin{equation} \label{equation: solve for rho 2}
\rho = q\inv a_0 = q\inv \prod_{j = 1}^r u_j, \quad \text{if $r$ is even}.
\end{equation}
Using this,  we can eliminate $\rho$  from equation (\ref{equation: yu wilcox admissibility condition 2}) in the definition of admissibility and from equation (\ref{equation:  RX admissibility first condition}) in the definition of $u$--admissibility.   
}

\section{Equivalence of admissibility and $u$--admissibility}

  Let $\ubold_1,  \dots, \ubold_r$,   $\rhobold$,  $\qbold$, and   $t$  be algebraically independent indeterminants over $\Z$.    Define $Z(t) \in \Q(\ubold_1,  \dots, \ubold_r, \rhobold, \qbold, t)$  by
  Equation (\ref{equation: formula for Z of t}), 
 and define $\eta_a$  for $a \ge 0$ by 
 $$
 (\qbold - \qbold\inv) \sum_{a \ge 0} \eta_a t^{-a} = Z(t;  \ubold_1, \dots, \ubold_r, \rhobold, \qbold).
 $$   
 Then   statements
 (2)--(5)   of Lemma \ref{lemma:  calculation of Z1} hold;  in particular, by 
 part (5) of Lemma \ref{lemma:  calculation of Z1},  $(\qbold -  \qbold\inv) \eta_a \in
\Z[\ubold_1,  \dots, \ubold_r, \qbold - \qbold\inv,  \rhobold\inv]$.   

\begin{lemma}   The elements $\eta_j $ satisfy
\begin{equation} \label{equation:  admissibility  of  eta's}
\begin{aligned}
 \big [\rhobold\inv &a_0 - \delta_{(\text{$r$ is even})}(\qbold - \qbold\inv) \big ](a_0 a_\ell - a_{r-\ell}  )  \\& 
 +  (\qbold\   -\qbold\inv)    \left (  \sum_{j = 1}^{r - \ell}  \eta_j a_{j+\ell}  
-  \sum_{j = \max(\ell + 1, \lceil r/2 \rceil)}^{\lfloor (\ell + r)/2 \rfloor} a_{2j - \ell}
+   \sum_{j =  \lceil \ell/2 \rceil}^{\min(\ell, \lceil r/2 \rceil -1)} a_{2j - \ell} \right ) = 0,  
\end{aligned}
\end{equation}
for $1\le  \ell \le r-1$.
\end{lemma}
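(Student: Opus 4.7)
My plan is to multiply the defining relation $(\qbold - \qbold\inv)\sum_{a\ge 0}\eta_a t^{-a} = Z(t)$ through by $P(t) := \prod_{\ell = 1}^r(t - \ubold_\ell) = \sum_{k=0}^r a_k t^k$ and extract the coefficient of $t^\ell$ from both sides (treating the equation as an identity of formal Laurent series in $t\inv$). The useful algebraic reduction is
$$P(t)\,G(t\inv) \;=\; (-1)^r t^r P(t\inv) \;=\; (-1)^r\sum_{m=0}^r a_{r-m}\, t^m,$$
which follows at once from $G(t\inv) = G(t)\inv = \prod_\ell (t\ubold_\ell - 1)/(t - \ubold_\ell)$. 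With this substitution, $P(t)Z(t)$ becomes the sum of three explicit pieces: $-\rhobold\inv P(t)$, $(\qbold - \qbold\inv)\,t^2 P(t)/(t^2 - 1)$, and $(-1)^r A(t)\sum_{m}a_{r-m}t^m$.

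From the left-hand side, the coefficient of $t^\ell$ in $P(t) Z(t)$ is $(\qbold - \qbold\inv)\bigl[a_\ell\eta_0 + \sum_{j=1}^{r-\ell} a_{j+\ell}\eta_j\bigr]$. Expanding the right-hand side using the Laurent series $t^2/(t^2-1) = \sum_{k\ge 0} t^{-2k}$ and $t/(t^2-1) = \sum_{k\ge 0} t^{-2k-1}$, separately in the two cases of $A(t)$, produces an $\rhobold\inv$-part and a $(\qbold - \qbold\inv)$-part. After eliminating $\eta_0$ via Lemma \ref{lemma:  calculation of Z1}(3), which gives $(\qbold-\qbold\inv)\eta_0 = (a_0^2 - 1)\rhobold\inv + (\qbold-\qbold\inv)(1 - \delta_{(\text{$r$ is even})}a_0)$, the $\rhobold\inv$-contributions collect to exactly $-\rhobold\inv a_0(a_0 a_\ell - a_{r-\ell})$, and for $r$ even the extra term $(\qbold-\qbold\inv)a_\ell a_0$ coming from $\eta_0$ combines with a similar term of the geometric expansion to deliver the correction $-\delta_{(\text{$r$ is even})}(\qbold-\qbold\inv)(a_0 a_\ell - a_{r-\ell})$ appearing in (\ref{equation:  admissibility  of  eta's}).

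What remains is a purely combinatorial identification. The first geometric-series contribution, after the substitution $j = \ell + k$, becomes $\sum_{j = \ell + 1}^{\lfloor(\ell+r)/2\rfloor} a_{2j-\ell}$, and the second, using the parity of $r-\ell$ to parametrize the index, becomes $\sum_{j = \lceil \ell/2\rceil}^{\lceil r/2\rceil - 1} a_{2j-\ell}$. Comparing these with the trimmed ranges $\sum_{j=\max(\ell+1,\,\lceil r/2\rceil)}^{\lfloor(\ell+r)/2\rfloor}$ and $\sum_{j=\lceil \ell/2\rceil}^{\min(\ell,\,\lceil r/2\rceil - 1)}$ appearing in (\ref{equation:  admissibility  of  eta's}), and splitting into the two cases $\ell + 1 \ge \lceil r/2\rceil$ and $\ell + 1 < \lceil r/2\rceil$, the surplus terms on the two sides cancel exactly. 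This bookkeeping with floor and ceiling functions is the most delicate part of the argument, but requires no new ideas beyond careful case analysis on the parities of $r$ and $\ell$.
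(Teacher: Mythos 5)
Your proposal is correct and follows essentially the same route as the paper: multiply the generating-function identity by $\prod_\ell(t-\ubold_\ell)$ (the paper equivalently substitutes $t\mapsto t\inv$ and multiplies by $\prod_\ell(t\ubold_\ell-1)$, which differs only by the overall factor $(-1)^r t^r$), extract a single coefficient, eliminate $\eta_0$ via Lemma \ref{lemma:  calculation of Z1}, and finish with the same floor/ceiling case analysis reconciling the geometric-series ranges with the trimmed ranges in (\ref{equation:  admissibility  of  eta's}).
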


\begin{proof} 
We have
\begin{equation} \label{equation: calculation of u admissibility 0}
\begin{aligned}
(\qbold\  -\qbold\inv) & \sum_{a \ge 0} \eta_a t^a = Z(t\inv) 
\\&= - \rhobold\inv + (\qbold\  -\qbold\inv)/(1-t^2)   + A(t\inv)  \prod_{\ell = 1}^r  \frac{t - \ubold_\ell}{t \ubold_\ell -1}.
\end{aligned}
\end{equation}
Multiplying both sides of  (\ref{equation: calculation of u admissibility 0}) by $\prod_{\ell=1}^r (t\ubold_\ell - 1)$ gives
\begin{equation} \label{equation: calculation of u admissibility 1}
\begin{aligned}
(\qbold\  &-\qbold\inv) \ \prod_{\ell = 1} ^r (t\ubold_\ell - 1) \ \sum_{a \ge 0}  \eta_a t^{a}  \\&= - \rhobold\inv  \prod_{\ell = 1} ^r (t\ubold_\ell - 1) \ +  \ \frac{ \qbold -\qbold\inv}{1-t^2}  \prod_{\ell = 1} ^r (t\ubold_\ell - 1)\ + \ A(t\inv) \  \prod_{\ell = 1} ^r (t - \ubold_\ell) .
 \end{aligned}
\end{equation}
For $1 \le \ell \le r-1$, the coefficient of $t^{r - \ell}$ on the left side of (\ref{equation: calculation of u admissibility 1}) is
\begin{equation} \label{equation: calculation of u admissibility 2}
(-1)^r \ (\qbold  -\qbold\inv) \left (\eta_0 a_\ell +       \sum_{j = 1}^{r-\ell}  \eta_j a_{j+\ell}\right ).
\end{equation}
Taking into account the formula for  $\eta_0$ in part (4) of Lemma \ref{lemma:  calculation of Z1}, (\ref{equation: calculation of u admissibility 2}) becomes 
\begin{equation} \label{equation: calculation of u admissibility 3}
\begin{aligned}
(-1)^r \  &\bigg ( ({a_0^2 - 1 })\rhobold\inv a_\ell
+(\qbold -\qbold\inv) a_\ell  
  \\ & - \delta_{(\text{$r$ is even})} (\qbold -\qbold\inv) a_0  a_\ell 
+     (\qbold  -\qbold\inv)  \sum_{j = 1}^{r-\ell}  \eta_j a_{j+\ell}\bigg ).
\end{aligned}
\end{equation}
Now suppose that $r$ is odd.  Then for $1 \le \ell \le r-1$,  the coefficient of $t^{r-\ell}$ on the right side of (\ref{equation: calculation of u admissibility 1}) is
\begin{equation} \label{equation: calculation of u admissibility 4}
\begin{aligned}
\rhobold\inv a_\ell    & - \rhobold\inv a_0 a_{r-\ell}  \\& 
 -   (\qbold-\qbold\inv) \sum_{i = 0}^{\lfloor(r-\ell)/{2}\rfloor} a_{\ell + 2 i}  
   \ +  \  (\qbold-\qbold\inv) \sum_{i = 0}^{\lfloor(r-1-\ell)/{2}\rfloor} a_{r-1-\ell - 2 i} .
\end{aligned}
\end{equation}
Continuing with the case that $r$ is odd, and setting (\ref{equation: calculation of u admissibility 3})  equal to (\ref{equation: calculation of u admissibility 4}), we get
\begin{equation} \label{equation: calculation of u admissibility 5}
\begin{aligned}
0 = 
&  \rhobold\inv a_0(a_0 a_\ell - a_{r-\ell}  ) \\& 
 +  (\qbold-\qbold\inv) \left ( \sum_{j = 1}^{r-\ell}  \eta_j a_{j+\ell}  \  + \ a_\ell -  \sum_{i = 0}^{\lfloor(r-\ell)/{2}\rfloor} a_{\ell + 2 i}  
   \  +  \  \sum_{i = 0}^{\lfloor(r-1-\ell)/{2}\rfloor} a_{r-i-\ell - 2 i}  \right ).
\end{aligned}
\end{equation}
By examining cases, according to the parity of $\ell$ and the sign of   $\ell + 1 - \lceil r/2 \rceil$,  one can check that  the expression on the second line of (\ref{equation: calculation of u admissibility 5})  is equal to 
\begin{equation} \label{equation: calculation of u admissibility 6}
\begin{aligned}
 (\qbold\   -\qbold\inv)    \left (  \sum_{j = 1}^{r - \ell}  \eta_j a_{j+\ell}  
-  \sum_{j = \max(\ell + 1, \lceil r/2 \rceil)}^{\lfloor (\ell + r)/2 \rfloor} a_{2j - \ell}
+   \sum_{j =  \lceil \ell/2 \rceil}^{\min(\ell, \lceil r/2 \rceil -1)} a_{2j - \ell} \right ).
   \end{aligned}
\end{equation}
For example,  if $\ell$ is odd and $\ell + 1 \le   \lceil r/2 \rceil = (r+1)/2$,  then

\begin{equation} \label{equation: calculation of u admissibility 7}
\begin{aligned}
& -  \sum_{j = \max(\ell + 1, \lceil r/2 \rceil)}^{\lfloor (\ell + r)/2 \rfloor} a_{2j - \ell}
+   \sum_{j =  \lceil \ell/2 \rceil}^{\min(\ell, \lceil r/2 \rceil -1)} a_{2j - \ell} 
\\& = 
   \sum \{ a_k | k \text{ odd and }  1 \le k \le \ell \} -
      \sum \{ a_k | k \text{ odd and }  r+ 1 - \ell  \le k \le r \}, 
  \end{aligned}
  \end{equation}
  while 
  \begin{equation}   \label{equation: calculation of u admissibility 8}
\begin{aligned}
& a_\ell - \sum_{i = 0}^{\lfloor(r-\ell)/{2}\rfloor} a_{\ell + 2 i}  
   \  +  \  \sum_{i = 0}^{\lfloor(r-1-\ell)/{2}\rfloor} a_{r-i-\ell - 2 i} 
   \\& = 
   \sum \{ a_k | k \text{ odd and }  1 \le k \le r - 1 - \ell \} -
      \sum \{ a_k | k \text{ odd and }  \ell + 2  \le k \le r \}. 
      \end{aligned}
\end{equation}
The summands  $\{a_k | k \text{ odd  and } \ell + 2  \le k \le r - 1 - \ell \}$ appear in both of the sums on the last line,  so they cancel to give
 \begin{equation}   \label{equation: calculation of u admissibility 9}
\begin{aligned}
& a_\ell - \sum_{i = 0}^{\lfloor(r-\ell)/{2}\rfloor} a_{\ell + 2 i}  
   \  +  \  \sum_{i = 0}^{\lfloor(r-1-\ell)/{2}\rfloor} a_{r-i-\ell - 2 i} 
   \\& = 
   \sum \{ a_k | k \text{ odd and }  1 \le k \le  \ell \} -
      \sum \{ a_k | k \text{ odd and }  r + 1 - \ell  \le k \le r \}. 
      \end{aligned}
\end{equation}
Comparing (\ref{equation: calculation of u admissibility 7})  and (\ref{equation: calculation of u admissibility 9})
gives 
 \begin{equation}   \label{equation: calculation of u admissibility 10}
\begin{aligned}
 a_\ell - \sum_{i = 0}^{\lfloor(r-\ell)/{2}\rfloor} a_{\ell + 2 i}  
  & \  +  \  \sum_{i = 0}^{\lfloor(r-1-\ell)/{2}\rfloor} a_{r-i-\ell - 2 i} 
 \\&   = 
   -  \sum_{j = \max(\ell + 1, \lceil r/2 \rceil)}^{\lfloor (\ell + r)/2 \rfloor} a_{2j - \ell}
+   \sum_{j =  \lceil \ell/2 \rceil}^{\min(\ell, \lceil r/2 \rceil -1)} a_{2j - \ell}, 
         \end{aligned}
\end{equation}
and therefore  the second line of (\ref{equation: calculation of u admissibility 5})  is equal to(\ref{equation: calculation of u admissibility 6}).   The other cases are handled similarly. 
This completes the proof of the lemma when $r$ is odd.

Now consider the case that $r$ is even.   Then  for $1 \le \ell \le r-1$,  the coefficient of $t^{r-\ell}$ on the right side of (\ref{equation: calculation of u admissibility 1}) is
\begin{equation} \label{equation: calculation of u admissibility 11}
\begin{aligned}
- \rhobold\inv a_\ell    & + \rhobold\inv a_0 a_{r-\ell}  \\& 
 +   (\qbold-\qbold\inv) \sum_{i = 0}^{\lfloor(r-\ell)/{2}\rfloor} a_{\ell + 2 i}  
   \ -  \  (\qbold-\qbold\inv) \sum_{i = 0}^{\lfloor(r-\ell)/{2}\rfloor} a_{r-\ell - 2 i} .
\end{aligned}
\end{equation}
Setting (\ref{equation: calculation of u admissibility 3})  equal to (\ref{equation: calculation of u admissibility 11}), we get
\begin{equation} \label{equation: calculation of u admissibility 12}
\begin{aligned}
0 = 
&  \rhobold\inv a_0^2 a_\ell -  \rhobold\inv a_0 a_{r-\ell}   - (\qbold - \qbold\inv) a_0 a_\ell \\& 
 +  (\qbold-\qbold\inv) \left ( \sum_{j = 1}^{r-\ell}  \eta_j a_{j+\ell}  \  + \ a_\ell -  \sum_{i = 0}^{\lfloor(r-\ell)/{2}\rfloor} a_{\ell + 2 i}  
   \  +  \  \sum_{i = 0}^{\lfloor(r-\ell)/{2}\rfloor} a_{r-\ell - 2 i}  \right ) 
   \\ &  = \big [\rhobold\inv a_0 - (\qbold - \qbold\inv) \big ](a_0 a_\ell - a_{r-\ell})  
   \\& 
 +  (\qbold-\qbold\inv) \left ( \sum_{j = 1}^{r-\ell}  \eta_j a_{j+\ell}  \  + \ a_\ell  - a_{r-\ell} -  \sum_{i = 0}^{\lfloor(r-\ell)/{2}\rfloor} a_{\ell + 2 i}  
   \  +  \  \sum_{i = 0}^{\lfloor(r-\ell)/{2}\rfloor} a_{r-\ell - 2 i}  \right ). 
\end{aligned}
\end{equation}
As in the case that $r$ is odd,  one can show that the expression in the last line of (\ref{equation: calculation of u admissibility 12})  is equal to (\ref{equation: calculation of u admissibility 6}).   This completes the proof in case
$r$ is even.  
\end{proof}  

\begin{corollary} \label{corollary:  eta's satisfy Wilcox Yu condition}

 Let  $$\Lambda =\Z[\ubold_1\powerpm,  \dots, \ubold_r\powerpm, \rhobold\powerpm, \qbold\powerpm, (\qbold - \qbold\inv)\inv ]/ I$$
where
$I$ is the ideal generated by $  \rhobold\inv a_0-  \rhobold a_0\inv  - \delta_{(\text{$r$ is even})} (\qbold - \qbold\inv) $.    The image of the elements $\eta_j$  in $\Lambda$ satisfy
\begin{equation} \label{equation:  admissibility  of  eta's 2}
\begin{aligned}
\rhobold( a_\ell - & a_{r-\ell}/a_0  )  \\& 
 +  (\qbold\   -\qbold\inv)    \left (  \sum_{j = 1}^{r - \ell}  \eta_j a_{j+\ell}  
-  \sum_{j = \max(\ell + 1, \lceil r/2 \rceil)}^{\lfloor (\ell + r)/2 \rfloor} a_{2j - \ell}
+   \sum_{j =  \lceil \ell/2 \rceil}^{\min(\ell, \lceil r/2 \rceil -1)} a_{2j - \ell} \right ) = 0,  
\end{aligned}
\end{equation}
for $1\le  \ell \le r-1$.
\end{corollary}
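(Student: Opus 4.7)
The plan is to obtain the corollary as an immediate consequence of the preceding lemma by solving for $\rhobold^{-1} a_0 - \delta_{(\text{$r$ is even})}(\qbold - \qbold\inv)$ modulo the ideal $I$ and substituting.

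First I would observe that in $\Lambda$, since every $\ubold_j$ is invertible and $a_0 = (-1)^r \ubold_1 \cdots \ubold_r$, the element $a_0$ is a unit with inverse $a_0\inv = (-1)^r \prod_j \ubold_j\inv$. The defining relation of the ideal $I$ may therefore be rewritten in $\Lambda$ as
\begin{equation*}
\rhobold\inv a_0 - \delta_{(\text{$r$ is even})}(\qbold - \qbold\inv) = \rhobold a_0\inv .
\end{equation*}

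Next I would multiply both sides by $(a_0 a_\ell - a_{r-\ell})$, which gives in $\Lambda$ the identity
\begin{equation*}
\bigl[\rhobold\inv a_0 - \delta_{(\text{$r$ is even})}(\qbold - \qbold\inv)\bigr](a_0 a_\ell - a_{r-\ell}) = \rhobold a_0\inv (a_0 a_\ell - a_{r-\ell}) = \rhobold(a_\ell - a_{r-\ell}/a_0).
\end{equation*}
Substituting this into Equation (\ref{equation:  admissibility  of  eta's}) of the preceding lemma, which holds in the polynomial ring and hence a fortiori in its quotient $\Lambda$, immediately yields Equation (\ref{equation:  admissibility  of  eta's 2}) for each $1 \le \ell \le r-1$.

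There is essentially no obstacle here: the substance of the work has already been carried out in the preceding lemma, and the corollary is a reformulation of that identity after imposing the relation $\rhobold\inv a_0 - \rhobold a_0\inv = \delta_{(\text{$r$ is even})}(\qbold - \qbold\inv)$. The only point worth verifying explicitly is that $a_0$ and $\qbold - \qbold\inv$ are invertible in $\Lambda$, so that the manipulations above make sense; this is built into the definition of $\Lambda$ (we invert $\qbold - \qbold\inv$ and each $\ubold_j$).
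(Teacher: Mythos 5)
The proposal is correct and matches the paper's (implicit) argument: the paper states the corollary without proof, treating it as an immediate consequence of the preceding lemma, and your substitution of $\rhobold\inv a_0 - \delta_{(\text{$r$ is even})}(\qbold-\qbold\inv) = \rhobold a_0\inv$ modulo $I$, followed by multiplication by $a_0 a_\ell - a_{r-\ell}$, is exactly the intended step.
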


\begin{lemma} \label{lemma: weak admissibility condition for eta's}
 For $m \ge r$,  one has $\sum_{j= 0}^r  a_j \eta_{j+m-r} = 0$.  
\end{lemma}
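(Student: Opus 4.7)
The plan is to extract the coefficient of $t^m$ from both sides of equation (\ref{equation: calculation of u admissibility 1}) for each $m \ge r$. On the left-hand side, the identity $\prod_{\ell=1}^r(t\ubold_\ell - 1) = (-1)^r \sum_{j=0}^r a_{r-j} t^j$ (which follows by reversing coefficients in $\prod_\ell(t - \ubold_\ell) = \sum_j a_j t^j$) shows that the coefficient of $t^m$ is $(-1)^r(\qbold - \qbold\inv)\sum_{j=0}^{\min(r,m)} a_{r-j}\eta_{m-j}$, and for $m \ge r$ this rewrites (via $i = r - j$) as $(-1)^r(\qbold - \qbold\inv)\sum_{i=0}^r a_i \eta_{i + m - r}$. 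Since $\qbold - \qbold\inv$ is nonzero in the fraction field $\Q(\ubold_1,\dots,\ubold_r,\rhobold,\qbold)$, the claim reduces to showing that the coefficient of $t^m$ on the right-hand side of (\ref{equation: calculation of u admissibility 1}) vanishes for every $m \ge r$.

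The key step is to show that the right-hand side of (\ref{equation: calculation of u admissibility 1}) is actually a polynomial in $t$ of degree at most $r$. After substituting the explicit form of $A(t\inv)$, the non-polynomial contributions collect into $(\qbold - \qbold\inv)\, R(t) / (1 - t^2)$, where
\[
R(t) = \prod_{\ell=1}^r(t\ubold_\ell - 1) + t\prod_{\ell=1}^r(t - \ubold_\ell) \quad (r \text{ odd}),
\]
\[
R(t) = \prod_{\ell=1}^r(t\ubold_\ell - 1) - \prod_{\ell=1}^r(t - \ubold_\ell) \quad (r \text{ even}).
\]
Using the relation $\prod_\ell(t\ubold_\ell - 1) = (-1)^r \prod_\ell(1 - t\ubold_\ell)$ and evaluating at $t = \pm 1$, one checks in both parities that $R(1) = R(-1) = 0$, so $R(t)$ is divisible by $1 - t^2$. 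A degree count then gives $\deg(R/(1-t^2)) \le r - 1$ for $r$ odd and $\le r - 2$ for $r$ even. Together with the $\rhobold\inv$ contributions, which are polynomials of degree $\le r$, this shows the right-hand side is a polynomial in $t$ of degree at most $r$, and its coefficient of $t^m$ therefore vanishes automatically for $m > r$.

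The remaining case $m = r$ is handled by a direct computation of the three contributions to the $t^r$-coefficient on the right-hand side of (\ref{equation: calculation of u admissibility 1}): the term $-\rhobold\inv \prod_\ell(t\ubold_\ell-1)$ contributes $-(-1)^r \rhobold\inv a_0$; the $\rhobold\inv$ part of $A(t\inv)\prod_\ell(t-\ubold_\ell)$ contributes $\pm \rhobold\inv a_0$, with sign set by the parity of $r$; and the $(\qbold-\qbold\inv)$ piece contributes zero by the degree bound on $R/(1-t^2)$. A parity-separated inspection confirms that these two $\rhobold\inv a_0$ terms cancel, so the $t^r$-coefficient on the right is zero as well. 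The main obstacle is simply keeping parities straight: the signs arising from $(-1)^r$ and from the evaluations at $t = \pm 1$ must be tracked carefully across the two parities of $r$, but once set up, the divisibility of $R(t)$ by $1-t^2$ drops out cleanly from the symmetry between $\prod_\ell(t\ubold_\ell - 1)$ and $\prod_\ell(t - \ubold_\ell)$ under $t \mapsto 1/t$.
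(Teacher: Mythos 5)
Your proof is correct, and it follows the same overall strategy as the paper --- multiply the generating function identity by $\prod_\ell(t\ubold_\ell-1)$ as in (\ref{equation: calculation of u admissibility 1}) and compare coefficients of $t^m$ --- but you verify the vanishing of the right-hand coefficients by a genuinely different mechanism. The paper expands $1/(1-t^2)$ as the formal power series $\sum_{\ell\ge0}t^{2\ell}$, writes out the coefficient of $t^m$ as a pair of infinite sums of $a_k$'s, and checks term-by-term that they cancel for $m\ge r$ (doing this explicitly only for $r$ odd and asserting the even case is similar). You instead observe that the rational part collects into $(\qbold-\qbold\inv)R(t)/(1-t^2)$ with $R(\pm1)=0$, so that $R(t)/(1-t^2)$ is an honest polynomial of degree at most $r-1$ (odd $r$) or $r-2$ (even $r$), and then finish with a degree count plus the cancellation of the two $\rhobold\inv a_0$ contributions in the $t^r$ coefficient. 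Your route is cleaner: it replaces the parity bookkeeping of the infinite sums by a single evaluation at $t=\pm1$, treats both parities of $r$ uniformly, and makes transparent why nothing survives in degree $\ge r$ (the whole right side is a polynomial of degree at most $r$ whose top coefficient dies). The only thing the paper's computation buys that yours does not is an explicit formula for the lower-order coefficients, which is not needed for this lemma. All the sign checks in your argument ($R(1)=R(-1)=0$ in both parities, and the cancellation at $t^r$ using $a_0=(-1)^r\prod_\ell\ubold_\ell$ and $a_r=1$) are correct.
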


\begin{proof}   For $m \ge r$, the coefficient of $t^m$ on the left side of (\ref{equation: calculation of u admissibility 1})  is 
$$(-1)^r (\qbold - \qbold\inv) \sum_{j = 0}^r a_j \eta_{j + m-r}.$$
Thus, we have to show that the coefficient of $t^m$ on the right side of    
(\ref{equation: calculation of u admissibility 1})  is zero. 

If $r$ is odd, then the right side of (\ref{equation: calculation of u admissibility 1}) is
\begin{equation} \label{equation:  calculation of weak admissibility for eta's 1}
\begin{aligned}
-\rhobold\inv \prod_{\ell = 1}^r (t u_\ell -1)  &- \rhobold\inv a_0 \prod_{\ell = 1}^r (t - u_\ell)  \\&
+ (\qbold - \qbold\inv) \left ( \frac{\prod_{\ell = 1}^r (t u_\ell -1)}{1-t^2}     +        
\frac{t \prod_{\ell = 1}^r (t  -u_\ell)}{1-t^2}             \right )
\end{aligned}
\end{equation}
For $m > 0$, the coefficient of $t^m$ in the first line of (\ref{equation:  calculation of weak admissibility for eta's 1}) is zero.  Moreover, the coefficient of $t^r$ is 
$-\rhobold\inv ( \prod_{\ell = 1}^r \ubold_\ell  - a_0) = 0$.

Write $a_k = 0$  if $k < 0$  or $k > r$.  Then the second line of  (\ref{equation:  calculation of weak admissibility for eta's 1}) expands to 
\begin{equation} \label{equation:  calculation of weak admissibility for eta's 2}
\begin{aligned}
&(\qbold - \qbold\inv)\left ( (-1)^r \bigg [\sum_{j = 0}^r t^j a_{r-j}\bigg ] 
\bigg [\sum_{\ell \ge 0} t^{2 \ell}  \bigg ]   +   
\bigg [\sum_{j = 0}^r t^j a_{j}\bigg ] 
  \bigg [\sum_{\ell \ge 0} t^{2 \ell + 1}  \bigg ] \right )
  \\& = (\qbold - \qbold\inv)\left ( (-1)^r \sum_{m \ge 0}   \bigg [\sum_{\ell \ge 0} a_{r-m+ 2\ell} \bigg ] t^m
  + \sum_{m \ge 0}   \bigg [ \sum_{\ell \ge 0} a_{m-1 - 2 \ell} \bigg ] t^m
   \right )
\end{aligned}
\end{equation}
For $m \ge r$,  the coefficient of $t^m$  in (\ref{equation:  calculation of weak admissibility for eta's 2}) is zero.    Thus,  for $m \ge r$,  the coefficient of $t^m$ in (\ref{equation:  calculation of weak admissibility for eta's 1})  is zero.  

The proof when $r$ is even is similar.
\end{proof}

\begin{theorem} \label{theorem: equivalence}
Let $S$ be a ground ring with parameters $\rho$, $q$, $\delta_j$ and $u_1, \dots, u_r$, with $q - q\inv$ not a zero--divisor.  Then $S$ is admissible if and only if $S$ is $u$--admissible.
\end{theorem}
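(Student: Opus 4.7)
The plan is to encode both admissibility and $u$-admissibility as equalities of a single sequence $\epsilon_a := (q - q\inv)\delta_a \in S$. Let $\tilde{\epsilon}_a \in S$ denote the image of the polynomial $(\qbold - \qbold\inv)\eta_a \in \Z[\ubold_1, \ldots, \ubold_r, \rhobold^{-1}, \qbold - \qbold\inv]$ (Lemma \ref{lemma:  calculation of Z1}(5)) under the substitution homomorphism $\ubold_j \mapsto u_j$, $\rhobold^{-1} \mapsto \rho^{-1}$, $\qbold - \qbold\inv \mapsto q - q\inv$. By Lemma \ref{lemma:  calculation of Z1}(1) and the definition of $Z(t)$, the $u$-admissibility of $S$ is precisely the system of equalities $\epsilon_a = \tilde{\epsilon}_a$ for all $a \ge 0$, so the theorem reduces to showing that admissibility of $S$ is equivalent to that same system.

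For ($u$-admissible $\Rightarrow$ admissible), I transfer the three universal identities for the $\eta_a$ to $S$ via the substitution above. The image of Lemma \ref{lemma: weak admissibility condition for eta's} (multiplied through by $\qbold - \qbold\inv$ to make it polynomial) becomes (\ref{equation:  weak admissibility in yu wilcox}) after canceling the non-zero-divisor $q - q\inv$. The image of the $\eta_0$ formula in Lemma \ref{lemma:  calculation of Z1}(4), together with the ground-ring relation (\ref{equation:  basic relation in ground ring}) applied to $\delta_0$, yields (\ref{equation: yu wilcox admissibility condition 2}), just as in Remark \ref{remark: parameter preserving transformations preserve admissibility}(1). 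Finally, the image of (\ref{equation:  admissibility  of  eta's}) in $S$ — after using (\ref{equation: yu wilcox admissibility condition 2}) to replace $\rho^{-1} a_0 - \delta_{(r\text{ even})}(q - q\inv)$ by $\rho a_0^{-1}$ and dividing through by the invertible element $a_0$ — is exactly (\ref{equation: yu wilcox admissibility condition 1}).

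For the converse, assume admissibility and prove $\epsilon_a = \tilde{\epsilon}_a$ by induction on $a$. The base case $a = 0$ follows by comparing the ground-ring expression $\epsilon_0 = (q - q\inv) + \rho - \rho^{-1}$ with the specialization $\tilde{\epsilon}_0 = (a_0^2 - 1)\rho^{-1} + (q - q\inv)(1 - \delta_{(r\text{ even})} a_0)$ from Lemma \ref{lemma:  calculation of Z1}(4): the two agree precisely because (\ref{equation: yu wilcox admissibility condition 2}) holds. For $1 \le a \le r - 1$, equation (\ref{equation: yu wilcox admissibility condition 1}) (multiplied by $a_0$ and rewritten via (\ref{equation: yu wilcox admissibility condition 2})) and the specialization of (\ref{equation:  admissibility  of  eta's}) become the same $S$-linear relation, expressed in the $\epsilon_j$ and in the $\tilde{\epsilon}_j$ respectively; subtracting, the differences $f_j := \epsilon_j - \tilde{\epsilon}_j$ satisfy
\[
\sum_{j = 1}^{r - \ell} f_j\, a_{j + \ell} = 0 \qquad (1 \le \ell \le r - 1),
\]
which is an upper-triangular system with $a_r = 1$ on the diagonal, so back-substitution from $\ell = r - 1$ downward forces $f_1 = \dots = f_{r-1} = 0$. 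For $a \ge r$, both $\epsilon_a$ (via (\ref{equation:  weak admissibility in yu wilcox})) and $\tilde{\epsilon}_a$ (via the specialization of Lemma \ref{lemma: weak admissibility condition for eta's}, multiplied by $\qbold - \qbold\inv$) obey the common linear recursion $x_a = -\sum_{j = 0}^{r - 1} a_j\, x_{a - r + j}$, so equality propagates.

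The main obstacle is the step $1 \le a \le r - 1$: one must first check that the image in $S$ of (\ref{equation:  admissibility  of  eta's}) really is the universal counterpart of (\ref{equation: yu wilcox admissibility condition 1}) — which is essentially the content of Corollary \ref{corollary:  eta's satisfy Wilcox Yu condition} — and then verify that upon subtracting the two specialized relations, every term involving $\rho$, $a_0$, and $q - q\inv$ cancels, leaving only the monomials $a_{j + \ell}$ as coefficients of the $f_j$. The triangular structure guaranteed by $a_r = 1$ then delivers uniqueness, after which the extremes $a = 0$ and $a \ge r$ finish the proof.
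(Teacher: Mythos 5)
Your proof is correct and follows essentially the same route as the paper: the forward direction uses the same three ingredients (Corollary \ref{corollary:  eta's satisfy Wilcox Yu condition}, Lemma \ref{lemma: weak admissibility condition for eta's}, and part (4) of Lemma \ref{lemma:  calculation of Z1} via Remark \ref{remark: parameter preserving transformations preserve admissibility}(1)), and your converse is the paper's uniqueness argument made explicit --- where the paper introduces the auxiliary parameter set $P'=(\rho,q,\eta_a,u_1,\dots,u_r)$, observes it satisfies the same admissibility conditions, and invokes the unitriangular structure of (\ref{equation: yu wilcox admissibility condition 1}) to conclude the solutions coincide, you subtract the two specialized relations and solve the resulting homogeneous triangular system in the differences $f_j$. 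The two are the same argument in different packaging, so no further comment is needed.
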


 \begin{proof}  Let $\eta_a$ ($a \ge 0$)  be determined by $$(q - q\inv ) \sum_{a \ge 0} \eta_a t^{-a} =
 Z(t; u_1, \dots, u_r, \rho, q),$$

Suppose that the parameters are $u$--admissible.   Then
 $\delta_a = \eta_a$ for $a \ge 0$ by definition of $u$--admissibility, and the assumption on $q - q\inv$.   Condition (\ref{equation: yu wilcox admissibility condition 2}) holds by Remark \ref{remark: parameter preserving transformations preserve admissibility}, part (1), and because of this,   
  it follows from Corollary \ref{corollary:  eta's satisfy Wilcox Yu condition}  that  the parameters  satisfy condition (\ref{equation: yu wilcox admissibility condition 1}).   Moreover, the parameters satisfy
 condition (\ref{equation:  weak admissibility in yu wilcox})  according to Lemma \ref{lemma: weak admissibility condition for eta's}.   Thus the parameters are admissible.

 Conversely,  suppose that the  parameters are admissible.   The admissibility conditions (\ref{equation: yu wilcox admissibility condition 1}) and (\ref{equation:  weak admissibility in yu wilcox}) and the ground ring condition (\ref{equation:  basic relation in ground ring}) 
 uniquely determine the quantities  $(q-q\inv) \delta_a$ for  ($a \ge 0$)  as Laurent polynomials
 in $\rho$ and $u_1, \dots, u_r$.   Indeed,   note that 
 (\ref{equation: yu wilcox admissibility condition 1}) is a  system of linear equations in the variables $(q - q\inv) \delta_j$ ($1 \le j \le r-1$)   with unitriangular matrix of coefficients.    (Compare  ~\cite{GH3},  Remark 3.7.)     For $a \ge r$, the weak admissibility condition
 (\ref{equation:  weak admissibility in yu wilcox}), determines $\delta_a$ as a polynomial
 in $u_1, \dots, u_r$  and $\{ \delta_j : j < a\}$.   Finally (\ref{equation:  basic relation in ground ring})  determines  $(q - q\inv) \delta_0$.    
 
 Consider the new set of parameters $P' = (\rho, q, \eta_a, u_1, \dots, u_r)$  with the $\delta_a$'s replaced by the $\eta_a$'s and the other parameters unchanged.    We claim that $P'$ is also a set
 of admissible parameters (satisfying the ground ring condition). In fact,  condition  (\ref{equation: yu wilcox admissibility condition 2}) holds for $P'$, because it involves only $\rho$, $q$ and $u_1, \dots, u_r$.    The ground ring condition (\ref{equation:  basic relation in ground ring}) for $P'$ follows from condition  (\ref{equation: yu wilcox admissibility condition 2}) and  Lemma
 \ref{lemma:  calculation of Z1} part (4).  $P'$ satisfies
 conditions (\ref{equation: yu wilcox admissibility condition 1})  and 
 (\ref{equation:  weak admissibility in yu wilcox})  by Corollary \ref{corollary:  eta's satisfy Wilcox Yu condition} and Lemma \ref{lemma: weak admissibility condition for eta's}.   This finishes the verification of the claim.
 
 Since, $P'$ is a set of admissible parameters, 
 the  quantities $(q-q\inv) \eta_a$ for  ($a \ge 0$) are given by the same Laurent polynomials in the remaining parameters as are the quantities $(q-q\inv) \delta_a$ for  ($a \ge 0$).   Since $q - q\inv$ is not a zero divisor, we 
 have $\delta_a = \eta_a$  for all $a \ge 0$, and hence
 the original parameters are $u$--admissible.   
 \end{proof}
\bibliographystyle{amsplain}
\bibliography{admissibility} 

\end{document}